\pdfoutput=1
\documentclass[11pt]{article}

\usepackage[a4paper,margin=1in]{geometry}
\usepackage[T1]{fontenc}
\usepackage[utf8]{inputenc}
\usepackage{lmodern}
\usepackage{microtype}
\usepackage{amsmath,amssymb,amsthm}
\usepackage[numbers,sort&compress]{natbib}
\bibpunct[, ]{[}{]}{,}{n}{,}{,}
\makeatletter
\def\NAT@def@citea{\def\@citea{\NAT@separator}}
\makeatother
\usepackage{longtable}
\usepackage{array}
\usepackage{booktabs}
\usepackage{enumitem}
\usepackage{tikz}
\usepackage{float}
\usetikzlibrary{arrows.meta,positioning}
\usepackage[hidelinks]{hyperref}

\newcommand{\SV}{\operatorname{SV}}
\newcommand{\Pow}{\mathcal{P}}

\theoremstyle{plain}
\newtheorem{theorem}{Theorem}[section]
\newtheorem{lemma}[theorem]{Lemma}

\newtheorem{proposition}[theorem]{Proposition}

\theoremstyle{definition}
\newtheorem{definition}[theorem]{Definition}
\newtheorem{example}[theorem]{Example}

\theoremstyle{remark}
\newtheorem{remark}[theorem]{Remark}

\title{Scale-valued sets: a minimal framework for generalized set models}
\author{Subhasis Ray\\
Department of Mathematics, Visva-Bharati University, Santiniketan-731235, India\\
\texttt{subhasis.ray@visva-bharati.ac.in}}
\date{}

\begin{document}

\maketitle

\begin{abstract}
Many generalized set models have the same basic form: they assign a value to each object, and the main difference lies in the kind of values that are allowed. This paper studies that common form through scale-valued sets (SV-sets), defined as maps $U\times E\to\Sigma$, where $U$ is a universe, $E$ is a parameter set, and $\Sigma$ is a bounded De Morgan lattice. With a suitable choice of scale, SV-sets include ordinary sets, fuzzy sets, soft sets, bounded multisets, intuitionistic fuzzy sets, $L$-fuzzy sets, and Type-2 fuzzy sets. We study the basic structure of SV-sets. The relation between SV-sets and lattice-valued interval soft sets is also discussed. For complete chains, the SV setting gives a natural topological construction, and for groups, it gives an algebraic structure through SV-subgroups. The applications show how graded suitability and supporting evidence can be kept together in a single model, whereas one-coordinate reductions lose information.
\end{abstract}

\bigskip
\noindent\textbf{Keywords:} scale-valued sets; generalized set models; De Morgan lattices; fuzzy sets; soft sets; decision support

\section{Introduction}

Generalized set models were introduced to describe kinds of information that ordinary sets do not record well, such as graded membership, parameter dependence, multiplicity, or approximation. Fuzzy sets~\cite{zadeh1965}, soft sets~\cite{molodtsov1999}, multisets~\cite{hickman1980}, rough sets~\cite{pawlak1982}, intuitionistic fuzzy sets~\cite{atanassov1986}, and $L$-fuzzy sets~\cite{goguen1967} are familiar examples. Although these theories come from different motivations, many of them share the same outer form: they assign a value to each object, and in some cases that assignment is indexed by parameters.

This paper studies that common form directly. A \emph{scale-valued set} (SV-set) is a map
\[
A:U\times E\to\Sigma,
\]
where $U$ is a universe, $E$ is a parameter set, and $\Sigma$ is a bounded De Morgan lattice, called a \emph{scale}. $\Sigma$ records the kind of local membership information one wants to store. By changing the scale, one moves from Boolean membership to fuzzy grades, multiplicities, paired values, and other local descriptions without changing the outer definition.

 What the SV setting offers is a simple outer layer in which common structural facts can be stated once, and hybrid local information can be handled by choosing an appropriate scale.
After the basic notion, the pointwise properties of SV-sets and the natural maps between them are studied. Several standard models, including Type-2 fuzzy sets, are then recovered within this setting, and the relation with lattice-valued interval soft sets is also explained. After that, the notions of SV-topological spaces and SV-subgroups are presented, and product-scale applications are developed in which graded suitability and supporting evidence are kept together in a single scale.

\section{Definition of an SV-set and fundamental operations}\label{sec:basic-notion}

\begin{definition}\label{def:scale}
A \emph{scale} is a structure $\Sigma=(\Sigma,\vee,\wedge,0,1,\neg)$ satisfying the following conditions.
\begin{enumerate}[label=(\roman*)]
    \item $(\Sigma,\vee,\wedge)$ is a lattice.
    \item $0$ is the least element and $1$ is the greatest element of $\Sigma$.
    \item $\neg\neg a=a$ for every $a\in\Sigma$.
    \item $a\leq b$ implies $\neg b\leq \neg a$.
    \item $\neg(a\vee b)=\neg a\wedge \neg b$ and $\neg(a\wedge b)=\neg a\vee \neg b$ for all $a,b\in\Sigma$.
\end{enumerate}
Thus a scale is a bounded De Morgan lattice. The order $\leq$ is the lattice order, equivalently $a\leq b$ if and only if $a\wedge b=a$.
\end{definition}

\begin{definition}\label{def:svset}
Let $U$ be a nonempty universe and let $E$ be any set of parameters or contexts. An SV-set over $U$ with parameter set $E$ and scale $\Sigma$ is a map
\;
$A: U\times E \to \Sigma.$\smallskip

\noindent  We use $E=\{\ast\}$ to denote the function $A:U\to\Sigma$ and call it an \emph{unparameterized} SV-set.\smallskip

\noindent If $A$ is an SV-set and $e\in E$, the $e$-slice of $A$ is the map $A_e:U\to\Sigma$ defined by $A_e(x)=A(x,e)$.\smallskip

\noindent The value $A(x,e)$ is interpreted as the membership grade of $x$ under the parameter  $e$. In this way the parameter set $E$ plays the role familiar from soft set theory, while the scale $\Sigma$ controls the type of grades that are allowed.\smallskip

\noindent The collection
$\SV(U,E;\Sigma)=\{A\mid A:U\times E\to\Sigma\}$ represents
all SV-sets over $U$.
\end{definition}

\begin{example}
Let $U=\{u_1,u_2,u_3\}$, $E=\{p,q\}$, and $\Sigma=\{0,1,2\}$. A map $A:U\times E\to\Sigma$ may assign the values $A(u_1,p)=2$, $A(u_2,p)=1$, $A(u_3,p)=0$, $A(u_1,q)=1$, $A(u_2,q)=0$, and $A(u_3,q)=2$. Then the slices $A_p$ and $A_q$ are two chain-valued membership functions on the same universe. If the scale is changed from $\{0,1,2\}$ to $\{0,1\}$, the same outer definition becomes a soft-set style Boolean relation.
\end{example}

\begin{definition}
For $A,B\in \SV(U,E;\Sigma)$, define union, intersection, and complement pointwise by
\[
(A\vee B)(x,e)=A(x,e)\vee B(x,e),
\quad
(A\wedge B)(x,e)=A(x,e)\wedge B(x,e),
\quad
A^c(x,e)=\neg A(x,e).
\]
We write $A\subseteq B$ when $A(x,e)\leq B(x,e)$ for all $(x,e)\in U\times E$, and $A=B$ when the two functions are equal. 

For fixed $U$, $E$, and $\Sigma$, the class $\SV(U,E;\Sigma)$ is a bounded De Morgan lattice as $\Sigma$ is so.
Similarly if the scale $\Sigma$ is a complete lattice, then $\SV(U,E;\Sigma)$ is also complete.
\end{definition}

\begin{definition}
Let $\Sigma$ and $T$ be scales. A map $h:\Sigma\to T$ is called a \emph{scale homomorphism} if it preserves $\vee$, $\wedge$, $0$, $1$, and $\neg$.
\end{definition}

\begin{theorem}
Let $h:\Sigma\to T$ be a scale homomorphism. Define $h_*A\in \SV(U,E;T)$ by $(h_*A)(x,e)=h(A(x,e))$. Then $h_*$ preserves union, intersection, complement, and the inclusion relation.
\end{theorem}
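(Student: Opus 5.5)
The plan is a direct pointwise verification, since all operations on $\SV(U,E;\Sigma)$ are defined coordinatewise and $h$ acts coordinatewise. First we check that $h_*A$ is well defined: for each $(x,e)\in U\times E$, $A(x,e)\in\Sigma$, so $h(A(x,e))\in T$, and thus $h_*A:U\times E\to T$ lies in $\SV(U,E;T)$.

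For union, fix $(x,e)$ and compute
\[
(h_*(A\vee B))(x,e)=h\bigl(A(x,e)\vee B(x,e)\bigr)=h(A(x,e))\vee h(B(x,e))=(h_*A\vee h_*B)(x,e),
\]
using only that $h$ preserves $\vee$. Since $(x,e)$ is arbitrary, the two functions are equal. Intersection is handled identically using preservation of $\wedge$. For complement, $(h_*(A^c))(x,e)=h(\neg A(x,e))=\neg h(A(x,e))=(h_*A)^c(x,e)$, using preservation of $\neg$. As a side remark, $h_*$ also sends the constant SV-sets $0$ and $1$ to the constant SV-sets $0$ and $1$, because $h(0)=0$ and $h(1)=1$.

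For inclusion, the one point needing a small argument is that a lattice homomorphism is monotone. Suppose $a\le b$ in $\Sigma$. By Definition~\ref{def:scale}, $a\le b$ means $a\wedge b=a$, so $h(a)\wedge h(b)=h(a\wedge b)=h(a)$, and hence $h(a)\le h(b)$ in $T$. Now if $A\subseteq B$, then $A(x,e)\le B(x,e)$ for every $(x,e)$, so $h(A(x,e))\le h(B(x,e))$, which gives $h_*A\subseteq h_*B$.

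I expect no real obstacle. The only step that is not a bare rewriting is deriving monotonicity of $h$ from preservation of $\wedge$. We also note that only the implication is claimed: the reverse implication ($h_*A\subseteq h_*B\Rightarrow A\subseteq B$) would require $h$ to be an order embedding, so we do not attempt it.
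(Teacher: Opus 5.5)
Your proposal is correct and matches the paper's proof: you check union, intersection, and complement pointwise using that $h$ preserves $\vee$, $\wedge$, and $\neg$, and you get inclusion from monotonicity of $h$. The only difference is that you derive monotonicity explicitly from preservation of $\wedge$ (via $a\le b \iff a\wedge b=a$), whereas the paper simply cites it as a known property of lattice homomorphisms.
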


\begin{proof}
For union, 
\[
(h_*(A\vee B))(x,e)=h(A(x,e)\vee B(x,e))=h(A(x,e))\vee h(B(x,e))=(h_*A\vee h_*B)(x,e).
\]
The proofs for intersection and complement are similar. If $A\subseteq B$, then $A(x,e)\leq B(x,e)$ for all $(x,e)$, and as the lattice homomorphisms are order-preserving we get $h(A(x,e))\leq h(B(x,e))$; hence $h_*A\subseteq h_*B$.
\end{proof}

\begin{theorem}
Let $f:U'\to U$ and $g:E'\to E$ be maps. Define $(f,g)^*A\in \SV(U',E';\Sigma)$ by $(f,g)^*A=A\circ (f\times g)$. Then $(f,g)^*$ preserves union, intersection, complement, and the subset order.
\end{theorem}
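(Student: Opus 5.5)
The plan is to compute pointwise, since all operations on SV-sets are defined coordinatewise and $(f,g)^*$ is simply precomposition with $f\times g$. Fix $A,B\in\SV(U,E;\Sigma)$ and an arbitrary point $(x',e')\in U'\times E'$. By definition $((f,g)^*A)(x',e')=A(f(x'),g(e'))$, so every identity to be checked at $(x',e')$ becomes an identity at the single point $(f(x'),g(e'))\in U\times E$. Since the lattice operations and the negation on $\SV(U,E;\Sigma)$ are computed there pointwise, the required identities hold automatically.

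For union, I would write the chain
\[
((f,g)^*(A\vee B))(x',e')=(A\vee B)(f(x'),g(e'))=A(f(x'),g(e'))\vee B(f(x'),g(e'))=((f,g)^*A\vee (f,g)^*B)(x',e').
\]
Intersection is handled by the identical chain with $\wedge$ in place of $\vee$. For complement, I would use
\[
((f,g)^*A^c)(x',e')=\neg A(f(x'),g(e'))=((f,g)^*A)^c(x',e').
\]
Since $(x',e')$ was arbitrary, these give equalities of functions in $\SV(U',E';\Sigma)$.

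For the order, suppose $A\subseteq B$. Then $A(x,e)\leq B(x,e)$ holds for all $(x,e)\in U\times E$. In particular it holds at $(f(x'),g(e'))$, which gives $((f,g)^*A)(x',e')\leq ((f,g)^*B)(x',e')$ for every $(x',e')$, that is, $(f,g)^*A\subseteq (f,g)^*B$.

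There is no genuine obstacle here. The only point worth noting is that, unlike the preceding theorem on $h_*$, no hypothesis on $f$ or $g$ (such as injectivity or surjectivity) and no homomorphism property is needed, because the scale itself is unchanged and the operations are evaluated at the same scale values. Surjectivity would only matter for the converse, namely reflecting the order, which is not claimed.
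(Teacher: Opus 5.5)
Your proposal is correct and follows the paper's proof essentially verbatim: a pointwise evaluation at $(f(x'),g(e'))$ for union, with intersection and complement handled by the same computation (you write out the complement case explicitly, which the paper leaves as ``similar''). The order is obtained in the same way, by specializing $A(x,e)\leq B(x,e)$ to the point $(f(x'),g(e'))$.
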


\begin{proof}
For all $(x',e')\in U'\times E'$, one has
\[
\begin{aligned}
((f,g)^*(A\vee B))(x',e')
&=(A\vee B)(f(x'),g(e'))\\
&=A(f(x'),g(e'))\vee B(f(x'),g(e'))\\
&=((f,g)^*A\vee (f,g)^*B)(x',e').
\end{aligned}
\]
The remaining identities can be proved in the same way. If $A\subseteq B$, then $A(f(x'),g(e'))\leq B(f(x'),g(e'))$ for all $(x',e')$, so $(f,g)^*A\subseteq (f,g)^*B$.
\end{proof}

\begin{definition}\label{def 2.8}
Assume that $\Sigma$ is complete and let $f:U\to V$ be a map. For $A\in \SV(U,E;\Sigma)$ define $f_!A\in \SV(V,E;\Sigma)$ by
\[
(f_!A)(v,e)=\bigvee_{x\in U\,:\, f(x)=v} A(x,e),
\]
where the empty join is interpreted as $0$.
\end{definition}

\begin{proposition}
If $\Sigma$ is complete, then $f_!$ is monotone and preserves arbitrary pointwise unions:
\[
f_!\left(\bigvee_{i\in I}A_i\right)=\bigvee_{i\in I}f_!(A_i).
\]
If, in addition, $\Sigma$ is a chain, then $f_!A(v,e)$ is the supremum of the grades on the preimage $f^{-1}(v)$. In particular, if the supremum is attained (for example, in a finite chain or on a finite preimage), then $f_!A(v,e)$ is the maximum grade on that preimage.
\end{proposition}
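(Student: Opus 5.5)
The plan is to work pointwise at a fixed $(v,e)\in V\times E$ and rely only on the associativity and commutativity of arbitrary joins in a complete lattice. I would split the argument into three parts: monotonicity, preservation of arbitrary unions, and the chain remark.

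\emph{Monotonicity.} Suppose $A\subseteq B$. For every $x\in f^{-1}(v)$ we have $A(x,e)\leq B(x,e)\leq (f_!B)(v,e)$. So $(f_!B)(v,e)$ is an upper bound of $\{A(x,e): x\in f^{-1}(v)\}$, and hence $(f_!A)(v,e)\leq (f_!B)(v,e)$. If $f^{-1}(v)=\emptyset$, both sides equal $0$ by the convention in Definition~\ref{def 2.8}. Monotonicity also follows from the next step, since $A\subseteq B$ means $A\vee B=B$, but the direct argument is just as short.

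\emph{Arbitrary unions.} Pointwise unions of an indexed family are defined by $(\bigvee_i A_i)(x,e)=\bigvee_i A_i(x,e)$, which exists because $\Sigma$ is complete. The key computation is
\[
\Bigl(f_!\bigvee_{i\in I}A_i\Bigr)(v,e)=\bigvee_{x\in f^{-1}(v)}\ \bigvee_{i\in I}A_i(x,e)=\bigvee_{i\in I}\ \bigvee_{x\in f^{-1}(v)}A_i(x,e)=\Bigl(\bigvee_{i\in I}f_!A_i\Bigr)(v,e).
\]
The middle equality is the interchange of joins in a complete lattice. I would justify it by noting that both sides equal the join of the doubly indexed set $\{A_i(x,e): i\in I,\ x\in f^{-1}(v)\}$: each side is an upper bound of that set, and each is below any upper bound of it. This interchange is the only nontrivial point of the proof. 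The degenerate cases need a quick check. If $f^{-1}(v)=\emptyset$ or $I=\emptyset$, both sides are $0$, since an empty join is $0$ and a join of zeros is $0$.

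\emph{Chains.} When $\Sigma$ is a chain, the join is by definition the supremum of $\{A(x,e): x\in f^{-1}(v)\}$ in the linear order, so the first claim is a restatement. If that supremum belongs to the set, it is the maximum. A finite nonempty subset of a chain always has a maximum, by induction on its size using comparability. This covers the case of a finite preimage, and it also covers a finite chain, because then the value set is finite even when the preimage is not. For an empty preimage the value is $0$ by convention, so the words ``maximum grade'' should be read as applying to a nonempty preimage.
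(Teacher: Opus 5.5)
Your proof is correct and follows the paper's route. Monotonicity comes from the monotonicity of joins, preservation of arbitrary unions comes from the interchange $\bigvee_{x}\bigvee_{i}=\bigvee_{i}\bigvee_{x}$, and in the chain case the join is read as a supremum, which is a maximum when attained. You also supply details the paper leaves implicit: the doubly indexed join that justifies the interchange, the $I=\emptyset$ and empty-preimage cases, and the caveat that ``maximum grade'' applies only to nonempty preimages.
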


\begin{proof}
Monotonicity follows because joins are monotone in each variable. For arbitrary joins,
\[
\begin{aligned}
\left(f_!\left(\bigvee_{i\in I}A_i\right)\right)(v,e)
&=\bigvee_{x\,:\, f(x)=v}\left(\bigvee_{i\in I}A_i(x,e)\right)\\
&=\bigvee_{i\in I}\bigvee_{x\,:\, f(x)=v}A_i(x,e)
=\left(\bigvee_{i\in I}f_!(A_i)\right)(v,e).
\end{aligned}
\]
If $\Sigma$ is a chain, the join over a preimage is simply its supremum, that is, the maximum value when the supremum is attained, as happens for finite preimages or finite chains.
\end{proof}

\section{Generalized-Set Models as Special Cases of SV-Sets and Related Encodings}
The familiar sets ~\cite{zadeh1965,molodtsov1999,hickman1980,pawlak1982,atanassov1986,goguen1967} can be viewed as special cases of SV-set by suitable choices of the parameter and scale.
\begin{theorem}\label{thm main}
Fix a universe $U$. Then the following statements hold.
\begin{enumerate}[label=(\roman*)]
    \item Classical sets (crisp sets) $S\subseteq U$ are SV-sets with $E=\{\ast\}$ and $\Sigma=\{0,1\}$.

    \item Fuzzy sets on $U$ are SV-sets with $E=\{\ast\}$ and $\Sigma=[0,1]$ equipped with $\vee=\max$, $\wedge=\min$, and $\neg t=1-t$.

    \item Soft sets on $U$ with parameter set $E$ are SV-sets $A:U\times E\to\{0,1\}$.

    \item For $k\in\mathbb{N}$, bounded multisets with multiplicities in $\{0,1,\dots,k\}$ are SV-sets with $E=\{\ast\}$ and $\Sigma=\{0,1,\dots,k\}$ equipped with $\vee=\max$, $\wedge=\min$, and $\neg n=k-n$.

    \item Intuitionistic fuzzy sets on $U$ are SV-sets with $E=\{\ast\}$ and the scale
    \[
    \Delta=\{(a,b)\in[0,1]^2:\ a+b\le 1\},
    \]
    ordered by $(a,b)\le(c,d)$ if and only if $a\le c$ and $b\ge d$, with
    \[
    \begin{aligned}
    (a,b)\vee(c,d)&=(\max(a,c),\min(b,d)),\\
    (a,b)\wedge(c,d)&=(\min(a,c),\max(b,d)),\\
    \neg(a,b)&=(b,a).
    \end{aligned}
    \]

    \item If $L$ is taken as a bounded De Morgan lattice, then $L$-fuzzy sets $\mu:U\to L$ are SV-sets with $E=\{\ast\}$ and $\Sigma=L$. If $L$ is only a bounded lattice, the correspondence holds for joins and meets, but not necessarily for complements.

    \item If a rough set is represented abstractly by a pair of sets $L\subseteq M\subseteq U$ (for example, the lower and upper approximations $L(X)\subseteq U(X)$ of some $X\subseteq U$), then it determines an SV-set $A:U\to R$ by
    \[
    R=\{(0,0),(0,1),(1,1)\},\qquad A(x)=\bigl(\chi_L(x),\chi_M(x)\bigr).
    \]
    Moreover, with the chain order $(0,0)<(0,1)<(1,1)$, $\vee=\max$, $\wedge=\min$, and
    \[
    \neg(0,0)=(1,1),\qquad \neg(1,1)=(0,0),\qquad \neg(0,1)=(0,1),
    \]
    the map $(L,M)\mapsto A$ is a bijection between pairs $L\subseteq M\subseteq U$ and SV-sets $U\to R$, and it preserves union, intersection, and complement at the level of pairs.

    \item General Type-2 fuzzy sets on $U$ are SV-sets with $E=\{\ast\}$ and scale
    $\Sigma_{\mathrm{T2}}=[0,1]^{[0,1]}$
    equipped with the pointwise order, pointwise joins and meets, bounds $0(u)=0$, $1(u)=1$, and pointwise negation $(\neg f)(u)=1-f(u)$.
\end{enumerate}
\end{theorem}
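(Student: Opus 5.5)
The proof will be a case-by-case verification with a common template. For each item, I will (a) check that the proposed $\Sigma$ satisfies the five conditions of Definition~\ref{def:scale}, and (b) exhibit the identification between the classical model and maps $U\times E\to\Sigma$. Then I will check (c) that the classical operations correspond to the pointwise operations on $\SV(U,E;\Sigma)$. For (c), everything is pointwise, so it suffices to compare values at each $(x,e)$.

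Items (i)--(iv) are direct. For (i), $S\mapsto\chi_S$ is the usual bijection. Union, intersection and complement become $\max$, $\min$ and $1-\cdot$ on $\{0,1\}$. Item (iii) is the same after identifying a soft set $F:E\to\Pow(U)$ with $A(x,e)=\chi_{F(e)}(x)$. Item (ii) is the definition of a fuzzy set. Item (iv) is the definition of a multiset with multiplicity bound $k$. In (ii) and (iv) the scale axioms hold for any chain with an order-reversing involution: $t\mapsto 1-t$ and $n\mapsto k-n$ are involutive and antitone. In a chain, an antitone bijection swaps $\max$ and $\min$, which gives condition (v). Item (vi) is tautological when $L$ is De Morgan. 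For a merely bounded lattice, I will only remark that the lattice part of the pointwise structure survives, since no negation is available.

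For (v), I will first check closure of $\Delta$: if $a+b\le1$ and $c+d\le1$, then $\max(a,c)+\min(b,d)\le1$. Indeed, if $a\ge c$ the sum is at most $a+b$, and otherwise it is at most $c+d$; the meet is handled symmetrically. The given $\vee,\wedge$ are the join and meet for the stated order. The bounds are $0=(0,1)$ and $1=(1,0)$. The map $\neg(a,b)=(b,a)$ preserves $\Delta$, is involutive, and reverses the order. A coordinatewise check gives condition (v). This matches Atanassov's operations on membership/non-membership pairs. For (viii), I will argue that $[0,1]^{[0,1]}$ with pointwise structure is a power of the scale in (ii). The axioms are equational or order statements checked pointwise, so they transfer. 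The bounds are the constant functions.

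For (vii), I expect the main obstacle, since it is the only item with a genuine bijection and operation-preservation claim. First I will show that $R$ with the stated chain order and involution is a scale. As a three-element chain, it only needs $\neg$ to be an antitone involution, which holds because $\neg$ swaps the endpoints and fixes the middle. Next, the bijection: given $A:U\to R$, set $L=\{x:A(x)=(1,1)\}$ and $M=\{x:A(x)\ne(0,0)\}$. Then $L\subseteq M$, and this inverts $(L,M)\mapsto(\chi_L,\chi_M)$. The constraint $L\subseteq M$ is exactly what excludes the value $(1,0)$. For operations, the chain $\max$ on $R$ agrees with the coordinatewise $\max$ on these three pairs, so joins correspond to $(L\cup L',M\cup M')$ and meets to $(L\cap L',M\cap M')$. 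For complement, I will compute that $\neg A$ corresponds to the pair $(U\setminus M,\,U\setminus L)$: points in $L$ go to $(0,0)$, points outside $M$ go to $(1,1)$, and points in $M\setminus L$ are fixed. This is the standard rough-set complement, since lower and upper approximations of $U\setminus X$ are $U\setminus M$ and $U\setminus L$. I will state that this is the meaning of ``preserves complement at the level of pairs.''
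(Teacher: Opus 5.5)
Your proposal is correct and follows essentially the same item-by-item route as the paper. In particular, it uses the same inverse $L_A=\{x:A(x)=(1,1)\}$, $M_A=\{x:A(x)\neq(0,0)\}$ for rough sets and the same computation showing that $\neg A$ corresponds to $(U\setminus M,U\setminus L)$.

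There are two minor differences. You additionally verify that each proposed $\Sigma$ is a scale (closure of $\Delta$, its bounds $(0,1)$ and $(1,0)$, antitone involutions on chains), which the paper largely leaves implicit. Conversely, for (viii) the paper writes out the currying bijection $\mu_{\widetilde A}(x,u)=A(x,\ast)(u)$, with the convention of extending by $0$ outside $J_x$, and your sketch should state this explicitly too.
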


\begin{proof}
\begin{enumerate}[label=(\roman*)]
\item
Given $S\subseteq U$, define $A_S:U\times\{\ast\}\to\{0,1\}$ by $A_S(x,\ast)=\chi_S(x)$. Conversely, given an SV-set $A:U\times\{\ast\}\to\{0,1\}$, define $S_A=\{x\in U:A(x,\ast)=1\}$. Clearly $\chi_{S_A}(x)=A(x,\ast)$ for all $x\in U$.
\smallskip
\item
A fuzzy set on $U$ is a map $\mu:U\to[0,1]$~\cite{zadeh1965}. Define the SV-set $A_\mu:U\times\{\ast\}\to[0,1]$ by $A_\mu(x,\ast)=\mu(x)$, and conversely define $\mu_A(x)=A(x,\ast)$.
\smallskip
\item
A soft set on $U$ with parameters $E$ is presented as a map $F:E\to\Pow(U)$~\cite{molodtsov1999}. Given $F$, define $A_F:U\times E\to\{0,1\}$ by $A_F(x,e)=1$ if $x\in F(e)$. Conversely, given $A:U\times E\to\{0,1\}$, define $F_A(e)=\{x\in U:A(x,e)=1\}$. These constructions are inverse since $x\in F_A(e)$ if and only if $A(x,e)=1$. Soft union and intersection are parameterwise,
\[
(F\cup G)(e)=F(e)\cup G(e),\qquad (F\cap G)(e)=F(e)\cap G(e),
\]
so their characteristic functions are pointwise max and min. Hence $A_{F\cup G}=A_F\vee A_G$ and $A_{F\cap G}=A_F\wedge A_G$. Complements, when used, are likewise parameterwise and correspond to SV-complement.
\smallskip
\item
A bounded multiset with bound $k$ is presented as a multiplicity function $m:U\to\{0,1,\dots,k\}$~\cite{hickman1980}. Define $A_m:U\times\{\ast\}\to\{0,1,\dots,k\}$ by $A_m(x,\ast)=m(x)$, with inverse $m_A(x)=A(x,\ast)$. This correspondence preserves joins, meets, and complement.
\smallskip
\item
An intuitionistic fuzzy set is a pair $(\mu,\nu)$ of functions $\mu,\nu:U\to[0,1]$ with $\mu(x)+\nu(x)\le 1$ for all $x\in U$~\cite{atanassov1986}. Define $A_{\mu,\nu}:U\times\{\ast\}\to\Delta$ by $A_{\mu,\nu}(x,\ast)=(\mu(x),\nu(x))$, and conversely let $(\mu_A(x),\nu_A(x))=A(x,\ast)$. This is a bijection because the pointwise constraint $\mu+\nu\le 1$ is exactly the condition that $A(x,\ast)\in\Delta$. The listed formulas on $\Delta$ are precisely the usual intuitionistic-fuzzy union, intersection, and complement operations written pointwise, so the correspondence preserves joins, meets, and complement.
\smallskip
\item
An $L$-fuzzy set is a map $\mu:U\to L$ for an appropriate truth-value structure $L$~\cite{goguen1967}. If $L$ is taken as a bounded De Morgan lattice, then the same identity construction as in part~(ii) gives a bijection between $L$-fuzzy sets and SV-sets $U\times\{\ast\}\to L$, and the pointwise SV-operations preserve the lattice operations in $L$. If $L$ is only a bounded lattice, the construction still preserves joins and meets, but complements need not be available.
\smallskip
\item
In Pawlak rough set theory, a rough set can be represented by a pair of approximations $(L(X),M(X))$ with $L(X)\subseteq M(X)\subseteq U$~\cite{pawlak1982}. Consider the class of all pairs $(L,M)$ with $L\subseteq M\subseteq U$. Define $A_{L,M}:U\to R$ by $A_{L,M}(x)=(\chi_L(x),\chi_M(x))$. Conversely, given $A:U\to R$, define
\[
L_A=\{x\in U:A(x)=(1,1)\},\qquad M_A=\{x\in U:A(x)\neq(0,0)\}.
\]
Then $L_A\subseteq M_A$ holds automatically, and $A_{L_A,M_A}=A$ on the three values in $R$; also $L_{A_{L,M}}=L$ and $M_{A_{L,M}}=M$. Thus $(L,M)\leftrightarrow A$ is a bijection between pairs $L\subseteq M\subseteq U$ and $R$-valued SV-sets. For operations, define on pairs
\[
(L,M)\cup(L',M')=(L\cup L',M\cup M'),\qquad (L,M)\cap(L',M')=(L\cap L',M\cap M'),
\]
and complement by
\[
(L,M)^c=(U\setminus M, U\setminus L).
\]
Since $\chi_{A\cup B}=\max(\chi_A,\chi_B)$ and $\chi_{A\cap B}=\min(\chi_A,\chi_B)$ pointwise, and since $\neg(\chi_L,\chi_M)=(1-\chi_M,1-\chi_L)$ corresponds to $(U\setminus M,U\setminus L)$, the bijection preserves join, meet, and complement induced by the De Morgan structure on $R$.
\smallskip
\item
A general Type-2 fuzzy set~\cite{mendeljohn2002} may be written as a membership function
\[
\mu_{\widetilde A}:U\times[0,1]\to[0,1],
\]
where, for each fixed $x\in U$, the slice $u\mapsto \mu_{\widetilde A}(x,u)$ is the secondary membership function of $x$ (with the convention that it is $0$ outside its primary domain $J_x$, if variable primary domains are used). Since $[0,1]$ is a complete chain, its function space $[0,1]^{[0,1]}$ inherits pointwise suprema, infima, bounds, and De Morgan negation. Hence $\Sigma_{\mathrm{T2}}$ is a complete scale. Now define
\[
A^{\mathrm{T2}}(x,\ast)=f_x,\qquad f_x(u)=\mu_{\widetilde A}(x,u).
\]
Then $A^{\mathrm{T2}}(x,\ast)\in\Sigma_{\mathrm{T2}}$ for every $x\in U$, so $A^{\mathrm{T2}}$ is an SV-set.\\ Conversely, if
\[
A:U\times\{\ast\}\to\Sigma_{\mathrm{T2}}, \quad
\text{
define}
\quad
\mu_{\widetilde A}(x,u)=A(x,\ast)(u).
\]
These two constructions are inverse to one another, so general Type-2 fuzzy sets on $U$ correspond bijectively to SV-sets with $E=\{\ast\}$ and scale $\Sigma_{\mathrm{T2}}$.
\end{enumerate}
\end{proof}

\begin{remark}
When only the footprint of uncertainty is needed, an Interval Type-2 fuzzy set may be packaged by the interval scale
\[
\Sigma_{\mathrm{IT2}}=\mathcal{I}([0,1])=\{[l,u]\subseteq[0,1]:0\le l\le u\le 1\} \quad
\text{
via}
\quad
A_{\mathrm{IT2}}(x,\ast)=\bigl[\underline{\mu}_{\widetilde A}(x),\overline{\mu}_{\widetilde A}(x)\bigr].
\]
This gives the familiar lower/upper membership inside the SV-set setting. 

If one also wants an external parameter set $E$, the same construction gives
\[
A:U\times E\to\Sigma_{\mathrm{T2}},\qquad A(x,e)(u)=\mu_{\widetilde A}(x,e,u).
\]
Thus the SV parameter set keeps track of context, labels, or criteria separately from the primary and secondary membership structure used in Type-2 fuzziness.
\end{remark}

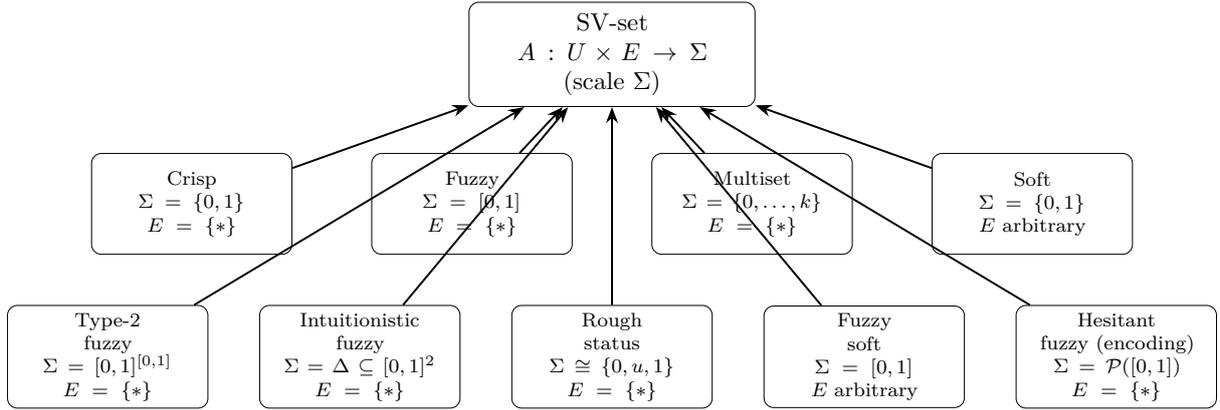
\begin{figure}[H]
\centering
\resizebox{\textwidth}{!}{%
\begin{tikzpicture}[
  svbox/.style={draw, rounded corners, align=center, inner sep=4pt, text width=3.8cm, minimum height=1.5cm, font=\small},
  box/.style={draw, rounded corners, align=center, inner sep=3pt, text width=2.65cm, minimum height=1.45cm, font=\scriptsize},
  arr/.style={-Stealth, thick}
]
\node[svbox] (sv) at (0,3.2) {SV-set\\$A:U\times E\to\Sigma$\\(scale $\Sigma$)};

\node[box] (crisp) at (-6.0,1.05) {Crisp\\$\Sigma=\{0,1\}$\\$E=\{\ast\}$};
\node[box] (fuzzy) at (-2.0,1.05) {Fuzzy\\$\Sigma=[0,1]$\\$E=\{\ast\}$};
\node[box] (multi) at ( 2.0,1.05) {Multiset\\$\Sigma=\{0,\dots,k\}$\\$E=\{\ast\}$};
\node[box] (soft)  at ( 6.0,1.05) {Soft\\$\Sigma=\{0,1\}$\\$E$ arbitrary};

\node[box] (type2) at (-7.2,-1.15) {Type-2\\fuzzy\\$\Sigma=[0,1]^{[0,1]}$\\$E=\{\ast\}$};
\node[box] (ifs)   at (-3.6,-1.15) {Intuitionistic\\fuzzy\\$\Sigma=\Delta\subseteq[0,1]^2$\\$E=\{\ast\}$};
\node[box] (rough) at ( 0.0,-1.15) {Rough\\status\\$\Sigma\cong\{0,u,1\}$\\$E=\{\ast\}$};
\node[box] (fsoft) at ( 3.6,-1.15) {Fuzzy\\soft\\$\Sigma=[0,1]$\\$E$ arbitrary};
\node[box] (hes)   at ( 7.2,-1.15) {Hesitant\\fuzzy (encoding)\\$\Sigma=\mathcal P([0,1])$\\$E=\{\ast\}$};

\draw[arr] (crisp) -- (sv);
\draw[arr] (fuzzy) -- (sv);
\draw[arr] (multi) -- (sv);
\draw[arr] (soft) -- (sv);
\draw[arr] (type2) -- (sv);
\draw[arr] (ifs) -- (sv);
\draw[arr] (rough) -- (sv);
\draw[arr] (fsoft) -- (sv);
\draw[arr] (hes) -- (sv);
\end{tikzpicture}%
}
\caption{SV-sets as a single umbrella definition: many generalized-set models arise by choosing the scale $\Sigma$ and the parameter set $E$. The hesitant fuzzy case is shown here at the level of encoding; see Torra~\cite{torra2010}.}
\label{fig:umbrella}
\end{figure}

\begin{remark}
 For the results above, only a small amount of structure is  needed on the scale $\Sigma$. To handle unions and intersections pointwise, it is enough that $\Sigma$ has bottom and top elements together with joins and meets. To handle complements pointwise, it needs an order-reversing involution satisfying the De Morgan laws.  if a model does not have a natural complement, a bounded lattice already suffices. If one wants to go further and include fuzzy-logical operations such as non-idempotent $t$-norms, then $\Sigma$ usually has to carry additional structure, such as that of a residuated lattice.
\end{remark}
\subsection{Relation to lattice-valued interval soft sets}
Theorem~\ref{thm main} collects the models that arise directly as special cases of SV-sets. Lattice-valued interval soft sets fit in the SV-set setting in a slightly different way: their connection with SV-sets is better understood through encoding, and in the usual membership-lattice setting by a natural interval-valued representation.

Zhang and Wang define a lattice-valued interval soft set (LVISS) as a pair $(F,A)$ with $A\subseteq E$ and $F:A\to I(L)$, where $I(L)$ is the interval lattice of a lattice $L$, with endpointwise joins and meets~\cite{ZhangWang2014LVISS}. They call $(F,A)$ \emph{simple} when every $F(e)$ is degenerate.

\begin{proposition}
Let $(F,A)$ be an LVISS based on a lattice $L$.
\begin{enumerate}[label=(\roman*)]
\item
Define
\[
\Phi_{\mathrm{for}}(F,A):A\times\{\ast\}\to I(L),\qquad
\Phi_{\mathrm{for}}(F,A)(e,\ast)=F(e).
\]
If $I(L)$ is also equipped with a suitable De Morgan negation, then this is an unparameterized SV-set.

\item
Assume that $L\cong V^{U}$ as lattices for some bounded lattice $V$, with pointwise order and pointwise joins/meets. Write
\[
F(e)=[f_e^{-},f_e^{+}]\in I(V^{U}),
\]
where $f_e^{-}\leq f_e^{+}$ pointwise, and define
\[
\Phi_{\mathrm{mem}}(F,A):U\times A\to I(V),\qquad
\Phi_{\mathrm{mem}}(F,A)(x,e)=[f_e^{-}(x),f_e^{+}(x)].
\]
Then $\Phi_{\mathrm{mem}}(F,A)$ is an interval-valued SV-set. On a fixed parameter domain, endpointwise LVISS unions and intersections agree with the pointwise SV joins and meets in the scale $I(V)$. If $V$ is a bounded De Morgan lattice, then $I(V)$ is also a bounded De Morgan lattice under
\[
\neg[a,b]=[\neg b,\neg a],
\]
and $\Phi_{\mathrm{mem}}$ respects complements as well.
\end{enumerate}
\end{proposition}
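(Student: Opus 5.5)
Part~(i) is essentially definitional. By Definition~\ref{def:svset}, an unparameterized SV-set with universe $A$ is just a map $A\times\{\ast\}\to\Sigma$ into a scale. So the only thing to check is that $I(L)$ is a scale once a De Morgan negation is supplied. $I(L)$ is a lattice under endpointwise joins and meets. If $L$ is bounded, then $[0,0]$ and $[1,1]$ are the least and greatest elements. The hypothesis supplies the involution and the De Morgan laws. With $I(L)$ a scale, $\Phi_{\mathrm{for}}(F,A)$ is literally a map $A\times\{\ast\}\to I(L)$, and we are done. I would state explicitly that boundedness of $L$ is needed for Definition~\ref{def:scale}(ii). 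I would read ``suitable'' as including it, or as being implied whenever a De Morgan negation exists on a lattice with extremal elements.

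For part~(ii), I would first check that $\Phi_{\mathrm{mem}}$ is well defined. Since $f_e^-\le f_e^+$ pointwise in $V^U$, we have $f_e^-(x)\le f_e^+(x)$ in $V$ for every $x$. Hence $[f_e^-(x),f_e^+(x)]\in I(V)$. I would also note that $I(V)$ is a bounded lattice under endpointwise operations, with bounds $[0,0]$ and $[1,1]$. Endpointwise joins and meets of intervals are again intervals, because $a\le b$ and $c\le d$ give $a\vee c\le b\vee d$ and $a\wedge c\le b\wedge d$.

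Next comes agreement of the operations on a fixed parameter domain $A$. For LVISSs $(F,A)$ and $(G,A)$, the union is $(F\sqcup G)(e)=[f_e^-\vee g_e^-,\,f_e^+\vee g_e^+]$ in $I(V^U)$. Joins in $V^U$ are pointwise, so evaluating at $x$ gives $[f_e^-(x)\vee g_e^-(x),\,f_e^+(x)\vee g_e^+(x)]$. This is exactly $\Phi_{\mathrm{mem}}(F,A)(x,e)\vee\Phi_{\mathrm{mem}}(G,A)(x,e)$ in $I(V)$. Meets are handled identically. The one real point here is that the isomorphism $L\cong V^U$ carries the LVISS endpoint operations to pointwise operations. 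This is exactly what ``pointwise order and pointwise joins/meets'' in the hypothesis provides.

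The complement part is where the actual verification lies, and I expect it to be the main (mild) obstacle. I would check in turn that $\neg[a,b]=[\neg b,\neg a]$ satisfies each scale axiom.

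\begin{itemize}
\item It lands in $I(V)$: $a\le b$ gives $\neg b\le\neg a$ by the antitonicity in Definition~\ref{def:scale}(iv).
\item It is an involution: $\neg\neg[a,b]=[\neg\neg a,\neg\neg b]=[a,b]$.
\item It is antitone: $[a,b]\le[c,d]$ means $a\le c$ and $b\le d$, which gives $\neg d\le\neg b$ and $\neg c\le\neg a$.
\item It satisfies the De Morgan law for joins: $\neg([a,b]\vee[c,d])=[\neg(b\vee d),\neg(a\vee c)]=[\neg b\wedge\neg d,\neg a\wedge\neg c]=\neg[a,b]\wedge\neg[c,d]$. The law for meets is dual.
\end{itemize}

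Finally, for complements of LVISSs I would take the LVISS complement on $I(V^U)$ to be the same endpoint-swapping formula with pointwise negation in $V^U$, namely $F^c(e)=[\neg f_e^+,\neg f_e^-]$. Evaluating at $x$ then gives $\Phi_{\mathrm{mem}}(F^c,A)(x,e)=\neg\Phi_{\mathrm{mem}}(F,A)(x,e)$. I would make this convention explicit, since the original LVISS definition does not fix a complement.
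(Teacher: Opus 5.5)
Your proposal is correct and follows the same route as the paper: (i) is read off directly from the definition of an SV-set, and (ii) rests on evaluation at $x$ commuting with the pointwise operations on $V^U$, combined with the endpointwise interval operations. You are more explicit than the paper in three places: you check that $\neg[a,b]=[\neg b,\neg a]$ satisfies the scale axioms, you flag that boundedness of $L$ is needed in (i), and you fix the LVISS complement convention $F^c(e)=[\neg f_e^+,\neg f_e^-]$, which the paper leaves implicit.
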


\begin{proof}
Part~(i) is simply another way of viewing the same information as a map into $I(L)$. For part~(ii), evaluation at $x\in U$ commutes with the pointwise lattice operations on $V^{U}$, and the interval operations in $I(V^{U})$ are defined endpointwise. Hence joins and meets are preserved on $U\times A$. If $V$ has a De Morgan negation, then the interval negation above gives the complement statement.
\end{proof}

\begin{remark}
\begin{enumerate}
\item
Let $\Sigma$ be a bounded lattice and let $A:U\times E\to\Sigma$ be an SV-set. For each
$e\in E$, the slice
$A_e:U\to\Sigma,\; A_e(x)=A(x,e),$
belongs to the function lattice $\Sigma^{U}$, so the assignment
$e\longmapsto [A_e,A_e]$
defines a simple LVISS on the full parameter domain $E$. Conversely, every simple LVISS of
this form arises from a unique SV-set. Thus SV-sets and simple LVISS are naturally related,
but only after passing to the function lattice.

\item
The membership-level construction in Proposition~3.4(ii) is natural only when the lattice
$L$ is presented as a function lattice $V^{U}$. If $L$ is treated as an abstract lattice, then
only the Proposition~3.4(i) is immediate. More generally, LVISS may
have non-degenerate intervals and variable domains $A\subseteq E$, so such features do not come from an ordinary pointwise
SV-set on a fixed parameter set.

\item
As a scale-based outer setting, SV-sets are broader, since the chosen scale need not be an
interval lattice. By contrast, LVISS are designed for the soft-set setting and retain the
variable parameter-domain structure $A\subseteq E$. In this sense, LVISS are more general
within interval-soft and soft-set theory, whereas SV-sets are more general at the level of
the value scale.

\item
For unions and intersections, only lattice structure is needed. Complements require extra
structure: if the underlying lattice carries a De Morgan negation, then the interval lattice
inherits the standard negation
$\neg[a,b]=[\neg b,\neg a].$
Without this, LVISS remain a lattice-theoretic framework.
\end{enumerate}
\end{remark}

\section{SV-Topological Spaces}\label{sec:topology}

  To define an SV-topology it is enough that the scale admits arbitrary joins and finite meets. The stronger hypothesis that $\Sigma$ is a complete chain is needed only when one passes from SV-open sets to ordinary topologies by strong cuts.

\begin{definition}
Let $\alpha\in \Sigma$ with $\alpha<1$, and let $A:U\to\Sigma$ be an SV-set with $E=\{\ast\}$. The strong $\alpha$-cut of $A$ is the ordinary subset
\[
A^{>\alpha}=\{x\in U:A(x)>\alpha\}.
\]
\end{definition}

\begin{theorem}\label{thm 4.2}
Let $\{A_i:i\in I\}$ be a family of SV-sets on $U$, let $A,B$ be SV-sets on $U$, and let $\alpha<1$. Then
\[
\left(\bigvee_{i\in I}A_i\right)^{>\alpha}=\bigcup_{i\in I}A_i^{>\alpha}.
\]
If, in addition, $\Sigma$ is a chain, then
\[
(A\wedge B)^{>\alpha}=A^{>\alpha}\cap B^{>\alpha}.
\]
\end{theorem}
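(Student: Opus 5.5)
The plan is to reduce both identities to elementwise facts about the scale. The key choice is how to read the condition ``$A(x)>\alpha$'' for a general, non-linear scale. I would read it as $A(x)\not\leq\alpha$, so that $A^{>\alpha}=\{x\in U: A(x)\not\leq\alpha\}$. When $\Sigma$ is a chain this is exactly the strict order $\alpha<A(x)$, so nothing changes in the chain case. In a general lattice, however, it is the reading under which the first identity holds with no linearity assumption.

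I would also make explicit that $\bigvee_{i\in I}A_i$ is the pointwise join. This requires $\Sigma$ to have the relevant joins, for instance $\Sigma$ complete, in line with the opening remark of Section~\ref{sec:topology}.

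\emph{Step 1 (joins, general scale).} Fix $x\in U$ and put $a_i=A_i(x)$, so that $\bigl(\bigvee_i A_i\bigr)(x)=\bigvee_i a_i$. The defining property of a supremum gives $\bigvee_i a_i\leq\alpha$ if and only if $a_i\leq\alpha$ for every $i\in I$. Negating both sides yields $\bigvee_i a_i\not\leq\alpha$ if and only if $a_i\not\leq\alpha$ for some $i$. In set terms, $x\in\bigl(\bigvee_i A_i\bigr)^{>\alpha}$ if and only if $x\in\bigcup_i A_i^{>\alpha}$. The empty family is covered as well: the empty join is $0\leq\alpha$, and the empty union is $\emptyset$. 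No chain or distributivity hypothesis enters this step.

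\emph{Step 2 (binary meets, chain scale).} Now assume $\Sigma$ is a chain, so $\wedge=\min$ and $\not\leq$ coincides with $>$. For $x\in U$ put $a=A(x)$ and $b=B(x)$. Then $\min(a,b)>\alpha$ holds if and only if both $a>\alpha$ and $b>\alpha$.
\begin{itemize}
\item For ``$\Leftarrow$'', $\min(a,b)$ equals one of $a$ or $b$, and both exceed $\alpha$.
\item For ``$\Rightarrow$'', we have $a\geq\min(a,b)>\alpha$, and likewise for $b$.
\end{itemize}
Hence $(A\wedge B)^{>\alpha}=A^{>\alpha}\cap B^{>\alpha}$.

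The only real subtlety, and the step I would flag, is the chain hypothesis in Step 2. Without linearity the inclusion $\subseteq$ still holds, because $a\wedge b\leq a$. The reverse inclusion can fail: for incomparable $a,b$ with $a\wedge b=\alpha$ we have $a,b\not\leq\alpha$ but $a\wedge b\leq\alpha$. I would mention this to justify why the hypothesis is placed exactly where it is. The hypothesis $\alpha<1$ plays no role in either computation. It only serves, later on, to keep the cuts from being trivially empty.
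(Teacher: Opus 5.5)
Your proof is correct and is essentially the paper's argument: the join identity comes from the supremum characterization ($\bigvee_i a_i\leq\alpha$ iff every $a_i\leq\alpha$), and the meet identity comes from computing $\min$ elementwise in a chain. Your explicit reading $A(x)\not\leq\alpha$ is exactly what the paper's step ``hence some index $j$ satisfies $A_j(x)>\alpha$'' tacitly needs, because with the literal strict order the first identity fails off chains (in $\{0,p,q,1\}$ with $p,q$ incomparable, take $\alpha=p$, $A_1(x)=q$, $A_2(x)=p$: the join $1$ exceeds $p$ but neither value does), so your caveat is justified and changes nothing in the chain case where the theorem is actually used.
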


\begin{proof}
Suppose $x\in \left(\bigvee_{i\in I}A_i\right)^{>\alpha}$. Then $\alpha<\bigvee_i A_i(x)$. If every $A_i(x)$ were at most $\alpha$, the join would also be at most $\alpha$, which is impossible. Hence some index $j$ satisfies $A_j(x)>\alpha$, so $x\in \bigcup_i A_i^{>\alpha}$. The converse inclusion is immediate.

Now assume that $\Sigma$ is a chain. If $x\in (A\wedge B)^{>\alpha}$, then $\alpha<A(x)\wedge B(x)$, hence $\alpha<A(x)$ and $\alpha<B(x)$, so $x\in A^{>\alpha}\cap B^{>\alpha}$. Conversely, if $x\in A^{>\alpha}\cap B^{>\alpha}$, then $\alpha<A(x)$ and $\alpha<B(x)$, and since $\Sigma$ is totally ordered this implies $\alpha<A(x)\wedge B(x)$. Thus $x\in (A\wedge B)^{>\alpha}$.
\end{proof}

\begin{definition}
Let $\Sigma$ be a complete lattice. An SV-topology on $U$ is a family $\tau\subseteq \SV(U,\{\ast\};\Sigma)$ containing the constant zero and constant one maps and closed under arbitrary pointwise joins and finite pointwise meets.

This is the direct scale-valued analogue of the usual closure axioms for fuzzy topological spaces~\cite{chang1968}.
\end{definition}

\begin{proposition}
If $\Sigma=\{0,1\}$ with the usual order, then SV-topologies on $U$ are exactly classical topologies on $U$ under characteristic functions.
\end{proposition}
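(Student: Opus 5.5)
The plan is to use the characteristic-function bijection between $\Pow(U)$ and $\SV(U,\{\ast\};\{0,1\})$, $S\mapsto\chi_S$ with inverse $A\mapsto\{x:A(x)=1\}$, as in Theorem~\ref{thm main}(i). This bijection carries families $\mathcal{T}\subseteq\Pow(U)$ to families $\tau_{\mathcal T}=\{\chi_S:S\in\mathcal T\}\subseteq\SV(U,\{\ast\};\{0,1\})$, and conversely. I will then show that the topology axioms on $\mathcal T$ correspond exactly to the SV-topology axioms on $\tau_{\mathcal T}$. This gives a bijection between classical topologies and SV-topologies, which is what ``exactly'' means.

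First, $\{0,1\}$ is a complete lattice, so the definition of SV-topology applies. The constant maps $0$ and $1$ are $\chi_\emptyset$ and $\chi_U$. Hence $\emptyset,U\in\mathcal T$ if and only if $0,1\in\tau_{\mathcal T}$.

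Next come arbitrary joins. For any family $\{S_i\}_{i\in I}$ and any $x\in U$, the join $\bigvee_i\chi_{S_i}(x)$ in $\{0,1\}$ is $1$ exactly when some $\chi_{S_i}(x)=1$. When $I=\emptyset$, the empty join is $0$. Thus $\bigvee_i\chi_{S_i}=\chi_{\bigcup_i S_i}$ in all cases, including the empty family, which gives $\chi_\emptyset$. Alternatively, one can read this off from Theorem~\ref{thm 4.2} with $\alpha=0$, since for $\Sigma=\{0,1\}$ the strong $0$-cut $A^{>0}$ is the set $\{x:A(x)=1\}$. Similarly, $\chi_S\wedge\chi_T=\min(\chi_S,\chi_T)=\chi_{S\cap T}$, and the empty meet is $\chi_U$. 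It follows that $\mathcal T$ is closed under arbitrary unions and finite intersections if and only if $\tau_{\mathcal T}$ is closed under arbitrary pointwise joins and finite pointwise meets.

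The final step is to check that the two constructions are mutually inverse on families, which holds because they are inverse on individual elements. There is no real obstacle here. The only point needing care is the convention for empty joins and meets, which is consistent with the explicit inclusion of the constants $0$ and $1$ in both definitions.
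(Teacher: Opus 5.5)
Your proposal is correct and follows the paper's route: transport the axioms along the characteristic-function bijection, matching the constant maps $0,1$ with $\varnothing,U$, arbitrary pointwise joins with unions, and finite pointwise meets with finite intersections. Your explicit handling of the empty join/meet conventions, and your check that the correspondence is bijective at the level of families, fill in details the paper's proof leaves implicit.
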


\begin{proof}
An SV-set $A:U\to\{0,1\}$ is the characteristic function of a unique subset of $U$. With this correspondence, the constant zero and constant one maps become $\varnothing$ and $U$, arbitrary pointwise joins become unions, and finite pointwise meets become finite intersections. Therefore the SV-topology axioms are exactly the ordinary topology axioms in this case.
\end{proof}

\begin{theorem}\label{thm 4.5}
If $\tau$ is an SV-topology on $U$ and $\Sigma$ is a complete chain, then for each $\alpha<1$ the family
\[
\tau^{>\alpha}=\{A^{>\alpha}:A\in \tau\}
\]
is a classical topology on $U$.
\end{theorem}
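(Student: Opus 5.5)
The plan is to check the three axioms of a classical topology for $\tau^{>\alpha}$ directly, using Theorem~\ref{thm 4.2} to turn the closure properties of $\tau$ into the corresponding properties of strong cuts. Fix $\alpha\in\Sigma$ with $\alpha<1$.

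\emph{Empty set and whole space.} The constant zero map $0_U$ and the constant one map $1_U$ lie in $\tau$ by definition. Since $0\le\alpha$, no point satisfies $0>\alpha$, so $(0_U)^{>\alpha}=\varnothing$. Since $\alpha<1$, every point satisfies $1>\alpha$, so $(1_U)^{>\alpha}=U$. This is the only place where the hypothesis $\alpha<1$ is used.

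\emph{Arbitrary unions.} Take a family $\{A_i^{>\alpha}:i\in I\}$ with each $A_i\in\tau$. Because $\Sigma$ is complete, the pointwise join $\bigvee_i A_i$ exists, and it lies in $\tau$ by the closure axiom. The first identity of Theorem~\ref{thm 4.2} gives $\bigcup_i A_i^{>\alpha}=(\bigvee_i A_i)^{>\alpha}\in\tau^{>\alpha}$. The empty family needs no separate treatment: its join is $0_U$ and its union is $\varnothing$, both already covered.

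\emph{Finite intersections.} By induction it suffices to treat two members. For $A,B\in\tau$ we have $A\wedge B\in\tau$, and since $\Sigma$ is a chain the second identity of Theorem~\ref{thm 4.2} gives $A^{>\alpha}\cap B^{>\alpha}=(A\wedge B)^{>\alpha}\in\tau^{>\alpha}$.

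The main point of care is that the elements of $\tau^{>\alpha}$ are only images under the cut map, so each set in $\tau^{>\alpha}$ may have several representing SV-sets. The argument above avoids this issue, because it chooses representatives $A_i$ and produces an explicit member of $\tau$ whose cut is the required set. The chain hypothesis is used only for intersections. Without it, $a>\alpha$ and $b>\alpha$ need not give $a\wedge b>\alpha$, which is exactly why Theorem~\ref{thm 4.2} needs that hypothesis for its second identity.
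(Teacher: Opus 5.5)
Your proof is correct and is essentially the paper's own argument: the constant maps give $\varnothing$ and $U$ because $0\le\alpha<1$, and closure under arbitrary unions and finite intersections follows from the two identities of Theorem~\ref{thm 4.2}. Your closing remark that the chain hypothesis is used only for intersections is not right, because the first identity of Theorem~\ref{thm 4.2} also needs totality, in the step from ``not every $A_i(x)\le\alpha$'' to ``some $A_j(x)>\alpha$'' (in the lattice $\{0,p,q,1\}$ with $p,q$ incomparable, the choice $\alpha=p$, $A_1(x)=p$, $A_2(x)=q$ puts $x$ in $(A_1\vee A_2)^{>\alpha}$ but in neither $A_i^{>\alpha}$), though this does not affect your proof since $\Sigma$ is assumed to be a chain.
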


\begin{proof}
Because $\alpha<1$, the strong cut of the constant zero map is empty and the strong cut of the constant one map is all of $U$. If $\{A_i:i\in I\}\subseteq \tau$, then
\[
\left(\bigvee_{i\in I}A_i\right)^{>\alpha}=\bigcup_{i\in I}A_i^{>\alpha}
\]
by Theorem~\ref{thm 4.2}, so $\tau^{>\alpha}$ is closed under arbitrary unions. If $A,B\in\tau$, then
\[
(A\wedge B)^{>\alpha}=A^{>\alpha}\cap B^{>\alpha},
\]
again by Theorem~\ref{thm 4.2}, so $\tau^{>\alpha}$ is closed under finite intersections.
\end{proof}

\begin{remark}
If the scale is a finite chain, weak cuts $A_{\alpha}=\{x\in U:\alpha\leq A(x)\}$ may also be used, because finite suprema are attained. The strong-cut version above is preferred in general complete chains because it avoids additional attainment assumptions.

The chain hypothesis in Theorem~\ref{thm 4.5} is essential. In the diamond lattice $M_3=\{0,p,q,1\}$ with $p$ and $q$ incomparable, let $A(x)=p$, $B(x)=q$, and $\alpha=0$ for some $x\in U$. Then $x\in A^{>0}\cap B^{>0}$, but $A(x)\wedge B(x)=0$, so $x\notin (A\wedge B)^{>0}$. Thus strong-cut families need not be closed under finite intersections when the scale is not a chain.
\end{remark}

\begin{definition}
Let $(U,\tau_U)$ and $(V,\tau_V)$ be SV-topological spaces over the same scale $\Sigma$. A map $f:U\to V$ is called SV-continuous if, for every $B\in \tau_V$, the pullback
\[
f^*B=B\circ f
\]
belongs to $\tau_U$.
\end{definition}

\begin{proposition}
Identity maps are SV-continuous, and compositions of SV-continuous maps are SV-continuous.
\end{proposition}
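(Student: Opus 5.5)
The plan is to check both claims directly from the definition of SV-continuity, since they reduce to elementary identities about composition of functions. No earlier theorem is needed beyond the fact that the pullback $f^*B=B\circ f$ is again an SV-set on the domain. This is just the map $(f,\mathrm{id}_{\{\ast\}})^*$ from the earlier theorem on pullbacks with $E=E'=\{\ast\}$.

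For the identity, I take an SV-topological space $(U,\tau)$ and the map $\mathrm{id}_U:U\to U$. For any $B\in\tau$ I compute $\mathrm{id}_U^*B=B\circ\mathrm{id}_U=B$, which lies in $\tau$. So $\mathrm{id}_U$ is SV-continuous with respect to $\tau$ on both sides.

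For composition, I take SV-continuous maps $f:(U,\tau_U)\to(V,\tau_V)$ and $g:(V,\tau_V)\to(W,\tau_W)$, all over the same scale $\Sigma$. The key step is the contravariance identity
\[
(g\circ f)^*C=C\circ(g\circ f)=(C\circ g)\circ f=f^*(g^*C)
\]
for every $C\in\tau_W$, which is just associativity of composition. Given $C\in\tau_W$, SV-continuity of $g$ gives $g^*C\in\tau_V$. SV-continuity of $f$ applied to $g^*C$ then gives $f^*(g^*C)\in\tau_U$. Hence $(g\circ f)^*C\in\tau_U$, so $g\circ f$ is SV-continuous.

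There is no real obstacle. The only point needing care is that all three spaces share the same scale $\Sigma$, so that $g^*C$ is an element of $\SV(V,\{\ast\};\Sigma)$ and can be tested against $\tau_V$. No completeness or chain hypothesis on $\Sigma$ is used.
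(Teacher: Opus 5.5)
Your proof is correct and matches the paper's argument: the identity pulls each open SV-set back to itself, and for compositions you use $(g\circ f)^*C=f^*(g^*C)$, then apply SV-continuity of $g$ and then of $f$. Your extra remarks, that associativity of composition justifies the contravariance identity and that the common scale $\Sigma$ lets $g^*C$ be tested against $\tau_V$, just make explicit what the paper leaves implicit.
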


\begin{proof}
If $\mathrm{id}_U:U\to U$ is the identity map, then $\mathrm{id}_U^*A=A$ for every $A\in\tau_U$, so $\mathrm{id}_U$ is SV-continuous. If $f:U\to V$ and $g:V\to W$ are SV-continuous and $C\in\tau_W$, then
\[
(g\circ f)^*C=f^*(g^*C).
\]
Since $g^*C\in\tau_V$ and $f$ is SV-continuous, one gets $f^*(g^*C)\in\tau_U$. Hence $g\circ f$ is SV-continuous.
\end{proof}

\begin{proposition}
If $\Sigma$ is a complete chain and $f:(U,\tau_U)\to(V,\tau_V)$ is SV-continuous, then for each $\alpha<1$ the same set-function $f:U\to V$ is continuous from $\tau_U^{>\alpha}$ to $\tau_V^{>\alpha}$.
\end{proposition}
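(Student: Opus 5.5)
The plan is to check continuity directly on the basic open sets of the target topology, using the identity that strong cuts commute with pullbacks. By Theorem~\ref{thm 4.5}, $\tau_U^{>\alpha}$ and $\tau_V^{>\alpha}$ are classical topologies on $U$ and $V$, because $\Sigma$ is a complete chain. Every open set of $\tau_V^{>\alpha}$ is, by definition, of the form $B^{>\alpha}$ for some $B\in\tau_V$. So it suffices to show that $f^{-1}(B^{>\alpha})\in\tau_U^{>\alpha}$ for every $B\in\tau_V$.

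The key step is the set-theoretic identity
\[
f^{-1}\bigl(B^{>\alpha}\bigr)=(f^*B)^{>\alpha}.
\]
It holds for any scale and any map $f$. Indeed, $x\in f^{-1}(B^{>\alpha})$ if and only if $B(f(x))>\alpha$, if and only if $(B\circ f)(x)>\alpha$, if and only if $x\in(f^*B)^{>\alpha}$.

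Since $f$ is SV-continuous, $f^*B\in\tau_U$. Hence $(f^*B)^{>\alpha}$ belongs to $\tau_U^{>\alpha}$ by the definition of that family. This shows that the preimage of every open set of $\tau_V^{>\alpha}$ is open in $\tau_U^{>\alpha}$, which is exactly continuity.

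There is no real obstacle here. The only point requiring care is that the chain and completeness hypotheses are used solely to ensure, via Theorem~\ref{thm 4.5}, that both cut families are genuine topologies. The continuity argument itself uses only the pullback identity for strong cuts.
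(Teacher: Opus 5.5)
Your proof is correct and is essentially the paper's argument: take $B\in\tau_V$, use SV-continuity to get $f^*B\in\tau_U$, and apply the identity $f^{-1}(B^{>\alpha})=(f^*B)^{>\alpha}$. You verify that identity explicitly where the paper only states it, and your remark that the chain and completeness hypotheses enter only through Theorem~\ref{thm 4.5} is accurate.
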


\begin{proof}
Let $B^{>\alpha}\in \tau_V^{>\alpha}$ with $B\in\tau_V$. Since $f$ is SV-continuous, $f^*B\in\tau_U$. Moreover,
\[
f^{-1}(B^{>\alpha})=(f^*B)^{>\alpha}.
\]
Therefore $f^{-1}(B^{>\alpha})\in \tau_U^{>\alpha}$, so $f$ is continuous between the induced crisp topologies.
\end{proof}

\begin{remark}
Intersections of SV-topologies on the same universe are again SV-topologies, since the defining axioms are preserved under intersection. Likewise, if $V\subseteq U$ and $\tau$ is an SV-topology on $U$, then restricting every $A\in\tau$ to $V$ gives the induced SV-topology on $V$.

If the parameter set $E$ is arbitrary, the same axioms define a parameterized SV-topology
$\tau\subseteq \SV(U,E;\Sigma)$. This may be viewed as a scale-valued soft topology on $U$:
for each $e\in E$, the slice family $\tau_e=\{A_e:A\in\tau\}$ is an ordinary SV-topology on $U$,
and if $\Sigma$ is a complete chain, then for each $\alpha<1$ the parameterwise strong cuts
$A^{>\alpha}(e)=\{x\in U:A(x,e)>\alpha\}$
form a soft topology on $U$ with parameter set $E$. In particular, $\Sigma=\{0,1\}$ gives soft topologies, while $\Sigma=[0,1]$ gives fuzzy soft topologies.
\end{remark}
\section{One Algebraic Extension: SV-Subgroups}\label{sec:sv-subgroups}

The SV-set language also extends naturally to algebraic structures. In this section only the top element and the meet of the scale are used, so the subgroup theory is formally lighter than the full bounded De Morgan setting. For simplicity we again work in the unparameterized case $E=\{\ast\}$.

\begin{definition}
Let $G$ be a group with identity $e_G$. An SV-set $A:G\to\Sigma$ is called an SV-subgroup if $A(e_G)=1$ and
$A(xy^{-1})\geq A(x)\wedge A(y) \quad \text{for all } x,y\in G.$\\
When $\Sigma=\{0,1\}$, this is exactly the characteristic function of an ordinary subgroup. When $\Sigma=[0,1]$, it reduces to the familiar fuzzy subgroup condition~\cite{rosenfeld1971}.
\end{definition}

\begin{lemma}\label{lem 5.2}
If $A$ is an SV-subgroup of $G$, then for all $x,y\in G$ one has
\[
A(x)\leq A(e_G),
\qquad
A(x^{-1})=A(x),
\qquad
A(xy)\geq A(x)\wedge A(y).
\]
\end{lemma}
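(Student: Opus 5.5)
The first claim is immediate: by definition $A(e_G)=1$, and $1$ is the greatest element of $\Sigma$, so $A(x)\leq 1=A(e_G)$ for every $x\in G$. No use of the subgroup inequality is needed here; only the bound axiom (ii) of Definition~\ref{def:scale}.

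For the inverse identity I will substitute $x=e_G$ and $y=x$ in the defining inequality $A(xy^{-1})\geq A(x)\wedge A(y)$. This gives
\[
A(x^{-1})=A(e_Gx^{-1})\geq A(e_G)\wedge A(x)=1\wedge A(x)=A(x),
\]
using that $1\wedge a=a$ because $1$ is the top element. Applying the same inequality to $x^{-1}$ in place of $x$ yields $A(x)=A((x^{-1})^{-1})\geq A(x^{-1})$. Antisymmetry of the lattice order then gives $A(x^{-1})=A(x)$.

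For the product inequality I will write $xy=x(y^{-1})^{-1}$ and apply the defining inequality to the pair $(x,y^{-1})$:
\[
A(xy)=A\bigl(x(y^{-1})^{-1}\bigr)\geq A(x)\wedge A(y^{-1})=A(x)\wedge A(y),
\]
where the last equality uses the inverse identity just proved. The only point requiring care is the order of the steps: the inverse identity must be established before the product inequality, since the latter depends on it. Everything uses only the top element, the meet, and lattice antisymmetry, consistent with the section's remark that no De Morgan structure is needed.
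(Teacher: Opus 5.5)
Your proof is correct and, for the inverse identity and the product inequality, it follows the paper's argument exactly: substitute $x=e_G$, swap in inverses and use antisymmetry, then write $xy=x(y^{-1})^{-1}$. The only difference is in the first claim, where you read $A(x)\le A(e_G)$ off the normalization $A(e_G)=1$. The paper instead takes $y=x$ to get $A(e_G)=A(xx^{-1})\ge A(x)\wedge A(x)=A(x)$, which, together with deriving $A(e_G)\wedge A(y)=A(y)$ from that bound, never uses $A(e_G)=1$; so the paper's argument also covers unnormalized SV-subgroups, while yours is shorter for the definition as stated.
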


\begin{proof}
Taking $y=x$ in the definition we get
\[
A(e_G)=A(xx^{-1})\geq A(x)\wedge A(x)=A(x),
\]
so $A(x)\leq A(e_G)$ for all $x$. Taking $x=e_G$ gives
\[
A(y^{-1})=A(e_Gy^{-1})\geq A(e_G)\wedge A(y)=A(y),
\]
and replacing $y$ by $y^{-1}$ gives the reverse inequality, hence $A(y^{-1})=A(y)$. Finally,
\[
A(xy)=A\bigl(x(y^{-1})^{-1}\bigr)\geq A(x)\wedge A(y^{-1})=A(x)\wedge A(y).
\]
\end{proof}

\begin{proposition}
For a map $A:G\to\Sigma$, the following are equivalent.
\begin{enumerate}[label=(\roman*)]
    \item $A(e_G)=1$ and $A(xy^{-1})\geq A(x)\wedge A(y)$ for all $x,y\in G$.
    \item $A(e_G)=1$, $A(x^{-1})=A(x)$ for all $x\in G$, and $A(xy)\geq A(x)\wedge A(y)$ for all $x,y\in G$.
\end{enumerate}
\end{proposition}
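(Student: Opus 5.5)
The equivalence splits into two implications, and both are direct.

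For (i)$\Rightarrow$(ii), I will invoke Lemma~\ref{lem 5.2}. Condition (i) is precisely the definition of an SV-subgroup, so the lemma gives $A(x^{-1})=A(x)$ and $A(xy)\geq A(x)\wedge A(y)$ for all $x,y\in G$. The normalization $A(e_G)=1$ carries over unchanged, so this direction needs no new work.

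For (ii)$\Rightarrow$(i), the condition $A(e_G)=1$ is assumed, and it remains to prove the subgroup inequality. Fix $x,y\in G$ and apply the product inequality of (ii) to the pair $x$ and $y^{-1}$:
\[
A(xy^{-1})\geq A(x)\wedge A(y^{-1}).
\]
Inverse symmetry then gives $A(y^{-1})=A(y)$. Substituting this into the meet yields $A(xy^{-1})\geq A(x)\wedge A(y)$, which is (i).

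There is no real obstacle in either direction. The only point to watch is that the argument uses nothing beyond the meet operation and the order of $\Sigma$, together with the top element $1$. In particular, it does not use negation, completeness, or the chain property, which matches the remark at the start of the section that only the top element and the meet are needed.
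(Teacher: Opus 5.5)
Your proof is correct and follows the paper's argument: (i)$\Rightarrow$(ii) is exactly Lemma~\ref{lem 5.2}, and (ii)$\Rightarrow$(i) applies the product inequality to $x$ and $y^{-1}$ and then uses $A(y^{-1})=A(y)$.
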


\begin{proof}
The implication (i)$\Rightarrow$(ii) is Lemma~\ref{lem 5.2}. Conversely, if (ii) holds, then
\[
A(xy^{-1})\geq A(x)\wedge A(y^{-1})=A(x)\wedge A(y),
\]
so (i) follows.
\end{proof}

\begin{theorem}
Assume that $\Sigma$ admits arbitrary meets. If $\{A_i:i\in I\}$ is a family of SV-subgroups of $G$, then the pointwise meet
\[
A=\bigwedge_{i\in I}A_i
\]
is an SV-subgroup of $G$.
\end{theorem}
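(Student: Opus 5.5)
The plan is to verify the two defining conditions of an SV-subgroup directly for $A=\bigwedge_{i\in I}A_i$, where $A(x)=\bigwedge_{i\in I}A_i(x)$ is the infimum in $\Sigma$. This infimum exists by the assumption that $\Sigma$ admits arbitrary meets.

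First, the identity condition. Each $A_i(e_G)=1$, so $A(e_G)=\bigwedge_{i\in I}1=1$. If one allows $I=\varnothing$, the empty meet is the top element $1$. In that case $A$ is the constant map $1$, which trivially satisfies both conditions, so this case needs no separate work.

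Second, the subgroup inequality. Fix $x,y\in G$ and an index $j\in I$. Since a meet lies below each of its terms, $A(x)\le A_j(x)$ and $A(y)\le A_j(y)$. Monotonicity of $\wedge$ in each variable then gives
\[
A(x)\wedge A(y)\le A_j(x)\wedge A_j(y)\le A_j(xy^{-1}),
\]
where the last inequality is the SV-subgroup condition for $A_j$. Thus $A(x)\wedge A(y)$ is a lower bound of the family $\{A_j(xy^{-1}):j\in I\}$. By the defining property of the infimum (greatest lower bound), $A(x)\wedge A(y)\le\bigwedge_{j}A_j(xy^{-1})=A(xy^{-1})$, which is what is required.

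There is no real obstacle. The one point needing care is that in a general lattice, as opposed to a chain, we cannot pick an index attaining a minimum. The argument therefore has to go through the lower-bound property of the infimum rather than through pointwise comparison with a minimizing $A_j$, and that is exactly how the step above is set up. No distributivity or negation is used, consistent with the remark that this section relies only on the top element and meets.
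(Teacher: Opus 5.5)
Your proof is correct and follows the same direct verification as the paper. The paper writes the key step as $A(xy^{-1})=\bigwedge_{i}A_i(xy^{-1})\geq\bigwedge_{i}(A_i(x)\wedge A_i(y))=A(x)\wedge A(y)$, using that arbitrary meets commute with binary meets; you unpack that same fact through the greatest-lower-bound property, and your remark on $I=\varnothing$ is a harmless addition.
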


\begin{proof}
At the identity one has
\[
A(e_G)=\bigwedge_{i\in I}A_i(e_G)=\bigwedge_{i\in I}1=1.
\]
Also, for all $x,y\in G$,
\[
\begin{aligned}
A(xy^{-1})
&=\bigwedge_{i\in I}A_i(xy^{-1})
\geq \bigwedge_{i\in I}(A_i(x)\wedge A_i(y))\\
&=\left(\bigwedge_{i\in I}A_i(x)\right)\wedge\left(\bigwedge_{i\in I}A_i(y)\right)
=A(x)\wedge A(y).
\end{aligned}
\]
Hence $A$ is an SV-subgroup.
\end{proof}

\begin{theorem}
For $\alpha\in\Sigma$, define the weak level set
\[
A_{\alpha}=\{x\in G:\alpha\leq A(x)\}.
\]
Then an SV-set $A:G\to\Sigma$ is an SV-subgroup if and only if $A_{\alpha}$ is a subgroup of $G$ for every $\alpha\in\Sigma$.
\end{theorem}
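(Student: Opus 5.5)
We prove the two implications separately, using Lemma~\ref{lem 5.2} and the equivalence of conditions (i) and (ii) in the preceding proposition.

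\emph{Forward direction.} Suppose $A$ is an SV-subgroup and fix $\alpha\in\Sigma$. Since $A(e_G)=1\geq\alpha$, we have $e_G\in A_\alpha$, so $A_\alpha$ is nonempty. Now take $x,y\in A_\alpha$. Then $\alpha\leq A(x)$ and $\alpha\leq A(y)$, so by the defining property of a meet, $\alpha\leq A(x)\wedge A(y)$. The defining inequality gives $A(x)\wedge A(y)\leq A(xy^{-1})$, hence $xy^{-1}\in A_\alpha$. By the one-step subgroup test, $A_\alpha$ is a subgroup. No chain hypothesis is needed here, because $\alpha\leq a$ and $\alpha\leq b$ imply $\alpha\leq a\wedge b$ in any lattice.

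\emph{Converse.} Suppose every $A_\alpha$ is a subgroup. First take $\alpha=1$. The set $A_1=\{x:A(x)=1\}$ is a subgroup, so it contains $e_G$, and therefore $A(e_G)=1$. Next fix $x,y\in G$ and put $\alpha=A(x)\wedge A(y)$. Then $\alpha\leq A(x)$ and $\alpha\leq A(y)$, so $x,y\in A_\alpha$. Since $A_\alpha$ is a subgroup, $xy^{-1}\in A_\alpha$, which says exactly that $A(xy^{-1})\geq A(x)\wedge A(y)$. So $A$ is an SV-subgroup.

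\emph{Main obstacle.} The only delicate point is the convention for subgroups. If ``$A_\alpha$ is a subgroup'' is read as requiring nonemptiness, the argument above goes through as written. The key step is choosing $\alpha=A(x)\wedge A(y)$, which is always an element of $\Sigma$, so the level-set hypothesis applies with no completeness or chain assumption on $\Sigma$. It also matters that we deduce $A(e_G)=1$ from $A_1$ being a subgroup, rather than from $A(e_G)\geq$ everything; the latter would only give $A(e_G)$ as the maximum of the values of $A$, not necessarily $1$. Everything else is routine.
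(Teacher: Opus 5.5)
Your proof is correct and follows the paper's argument step for step. For the forward direction you show $e_G\in A_\alpha$ and apply the one-step subgroup test; for the converse you take $\alpha=1$ to get $A(e_G)=1$ and then $\alpha=A(x)\wedge A(y)$. One small point: your opening sentence says you will use Lemma~\ref{lem 5.2} and the (i)$\Leftrightarrow$(ii) proposition, but the argument never uses either, and neither is needed.
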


\begin{proof}
Assume first that $A$ is an SV-subgroup and fix $\alpha\in\Sigma$. Since $A(e_G)=1$, one has $e_G\in A_{\alpha}$. If $x,y\in A_{\alpha}$, then $\alpha\leq A(x)$ and $\alpha\leq A(y)$, so
\[
A(xy^{-1})\geq A(x)\wedge A(y)\geq \alpha.
\]
Thus $xy^{-1}\in A_{\alpha}$, and $A_{\alpha}$ is a subgroup.

Conversely, assume that every $A_{\alpha}$ is a subgroup. Taking $\alpha=1$ shows that $A_1$ is nonempty, hence $e_G\in A_1$ and therefore $A(e_G)=1$. Now fix $x,y\in G$ and put $\alpha=A(x)\wedge A(y)$. Then $x,y\in A_{\alpha}$, so $xy^{-1}\in A_{\alpha}$. Hence
\[
A(xy^{-1})\geq \alpha=A(x)\wedge A(y),
\]
and $A$ is an SV-subgroup.
\end{proof}

\begin{theorem}
Let $\varphi:H\to G$ be a group homomorphism. If $A$ is an SV-subgroup of $G$, then $\varphi^*A=A\circ \varphi$ is an SV-subgroup of $H$.
\end{theorem}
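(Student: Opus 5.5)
The plan is to check the two defining conditions of an SV-subgroup directly for $\varphi^*A$, using only that $\varphi$ preserves the identity and products and inverses. No completeness or chain hypothesis on $\Sigma$ is needed, since the definition uses only the top element and binary meets.

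First, the identity condition. Because $\varphi$ is a group homomorphism, $\varphi(e_H)=e_G$. Hence
\[
(\varphi^*A)(e_H)=A(\varphi(e_H))=A(e_G)=1 .
\]

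Second, the inequality. Fix $x,y\in H$. The homomorphism property gives $\varphi(xy^{-1})=\varphi(x)\varphi(y)^{-1}$. Applying the defining inequality for $A$ to the elements $\varphi(x),\varphi(y)\in G$ yields
\[
(\varphi^*A)(xy^{-1})=A\bigl(\varphi(x)\varphi(y)^{-1}\bigr)\geq A(\varphi(x))\wedge A(\varphi(y))=(\varphi^*A)(x)\wedge(\varphi^*A)(y).
\]

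These two facts are exactly the definition of an SV-subgroup of $H$. I expect no real obstacle. The only point to state explicitly is that $\varphi(y^{-1})=\varphi(y)^{-1}$ and $\varphi(e_H)=e_G$, both standard consequences of $\varphi$ being a homomorphism.

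As an alternative, one could invoke the level-set characterization theorem just proved, noting that $(\varphi^*A)_\alpha=\varphi^{-1}(A_\alpha)$ and that preimages of subgroups are subgroups. The direct verification is shorter, so I would use it.
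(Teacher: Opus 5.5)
Your proposal is correct and matches the paper's proof essentially line for line. Both arguments get $(\varphi^*A)(e_H)=A(e_G)=1$ from $\varphi(e_H)=e_G$, then rewrite $\varphi(xy^{-1})=\varphi(x)\varphi(y)^{-1}$ and apply the defining inequality of $A$ at $\varphi(x),\varphi(y)$.
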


\begin{proof}
Since $\varphi(e_H)=e_G$, one gets $\varphi^*A(e_H)=A(e_G)=1$. Moreover, for $x,y\in H$,
\[
\varphi^*A(xy^{-1})=A\bigl(\varphi(xy^{-1})\bigr)=A\bigl(\varphi(x)\varphi(y)^{-1}\bigr)\geq A\bigl(\varphi(x)\bigr)\wedge A\bigl(\varphi(y)\bigr)=\varphi^*A(x)\wedge \varphi^*A(y).
\]
Hence $\varphi^*A$ is an SV-subgroup of $H$.
\end{proof}

\begin{remark}
If $\Sigma$ is complete and $\varphi:H\to G$ is a homomorphism, one may also consider the image-type construction given by $\varphi_!A$ from Definition~\ref{def 2.8}. Whether $\varphi_!A$ is again an SV-subgroup depends on additional distributivity properties of the scale, so we do not pursue that question here.

If the parameter set $E$ is arbitrary, the same definition gives a parameterized SV-subgroup
$A:G\times E\to\Sigma.$
This may be viewed as a scale-valued soft subgroup on $G$: for each $e\in E$, the slice
$A_e:G\to\Sigma$, defined by $A_e(x)=A(x,e)$, is an SV-subgroup of $G$. For each $\alpha\in\Sigma$, the parameterwise weak cuts
$A_{\alpha}(e)=\{x\in G:\alpha\leq A(x,e)\}$
then form a soft family of ordinary subgroups of $G$. In particular, $\Sigma=\{0,1\}$ gives
soft families of ordinary subgroups, while $\Sigma=[0,1]$ gives fuzzy soft subgroups.
\end{remark}

\section{Applied SV-Set Models with Product Scales}\label{sec:decision}
One of the most useful features of the SV setting is that the scale can store more than one kind of local information at the same time. This is especially natural in decision problems, where a grade is often accompanied by confidence, support, or evidence. Fuzzy soft sets already give a parameterized way to organize graded evaluations~\cite{roymaji2007}, while the supplier-selection literature makes it clear that confidence or credibility is different from the hesitancy represented in intuitionistic fuzzy information~\cite{aggarwal2019}. Product scales offer a simple way to keep both kinds of information visible until the final stage of decision making.

If $\Sigma_1$ and $\Sigma_2$ are scales, then the product $\Sigma_1\times \Sigma_2$ becomes a scale under the componentwise order, componentwise joins and meets, and componentwise involution. We use this product scale $\Sigma_1\times \Sigma_2$ in next examples.

\subsection{A product-scale model for laptop selection}

\begin{example}
A company wants to adopt a single laptop model for a group of employees. For each candidate it records both a suitability grade and the amount of supporting evidence behind that grade, such as benchmark data, battery tests, or service records.

Let
\[
U=\{L_1,L_2,L_3,L_4\},
\qquad
E=\{\textsf{performance},\textsf{battery},\textsf{portability},\textsf{service}\},
\]
and take the product scale
\[
\Sigma=[0,1]\times\{0,1,\dots,10\}.
\]
Suppose $(\mu,m)$ is a member of product scale $\Sigma$ and interpret it as ``grade $\mu$ supported by $m$ evidence items ''. 
Define the SV-set $A:U\times E\to\Sigma$ by Table~\ref{tab:laptop-data}.
\begin{table}[h]
\centering
\small
\caption{Laptop evaluations with grade and evidence count.}
\label{tab:laptop-data}
\begin{tabular}{@{}lcccc@{}}
\toprule
 & \textsf{performance} & \textsf{battery} & \textsf{portability} & \textsf{service}\\
\midrule
$L_1$ & $(0.90,8)$ & $(0.60,6)$ & $(0.70,8)$ & $(0.50,3)$ \\
$L_2$ & $(0.80,9)$ & $(0.80,7)$ & $(0.60,6)$ & $(0.70,5)$ \\
$L_3$ & $(0.70,5)$ & $(0.90,4)$ & $(0.80,5)$ & $(0.60,4)$ \\
$L_4$ & $(0.60,10)$ & $(0.70,10)$ & $(0.90,9)$ & $(0.80,9)$ \\
\bottomrule
\end{tabular}
\end{table}

Using the conservative rule
\[
B(L)=\bigwedge_{e\in E}A(L,e),
\]
one obtains
\[
B(L_1)=(0.50,3),\qquad B(L_2)=(0.60,5),\qquad B(L_3)=(0.60,4),\qquad B(L_4)=(0.60,9).
\]
Consider the expression
\[
r_\lambda(\mu,m)=\lambda\mu+(1-\lambda)\frac{m}{10},
\]
Here $\lambda\in(0,1)$ tells us how much importance is given to grade and how much to evidence: a larger $\lambda$ gives more weight to grade, while a smaller $\lambda$ gives more weight to evidence. With $\lambda=0.7$, the resulting values are
\[
r_{0.7}(B(L_1))=0.44,\quad r_{0.7}(B(L_2))=0.57,\quad r_{0.7}(B(L_3))=0.54,\quad r_{0.7}(B(L_4))=0.69.
\]
Hence
\[
L_4\succ L_2\succ L_3\succ L_1.
\]
In this example the ranking stays the same for every $0<\lambda<1$, so the choice $\lambda=0.7$ is only for illustration. If one uses only the grade coordinate, then $L_2$, $L_3$, and $L_4$ all tie at grade $0.60$, whereas the full SV-profile separates them by using the evidence coordinate.
\end{example}
\subsection{A supplier-selection model with audit evidence}

\begin{example}
Supplier selection is a common multi-criteria decision problem, and in practice the recorded evaluations often come with different levels of support or credibility~\cite{aggarwal2019}. The SV setting allows both aspects to be kept together in a single model.

Let
\[
U=\{S_1,S_2,S_3,S_4\},
\qquad
E=\{\textsf{cost},\textsf{quality},\textsf{delivery},\textsf{sustainability},\textsf{flexibility}\},
\]
and take
\[
\Sigma=[0,1]\times\{0,1,\dots,5\}.
\]
Here $(\mu,m)$ means ``suitability grade $\mu$ supported by $m$ independent audits, tests, or documents.'' Define $A:U\times E\to\Sigma$ by Table~\ref{tab:supplier-data}.
\begin{table}[h]
\centering
\small
\caption{Supplier evaluations with grade and evidence count.}
\label{tab:supplier-data}
\begin{tabular}{@{}lccccc@{}}
\toprule
 & \textsf{cost} & \textsf{quality} & \textsf{delivery} & \textsf{sustainability} & \textsf{flexibility}\\
\midrule
$S_1$ & $(0.80,3)$ & $(0.70,4)$ & $(0.60,2)$ & $(0.90,1)$ & $(0.60,2)$ \\
$S_2$ & $(0.70,5)$ & $(0.90,4)$ & $(0.70,4)$ & $(0.60,2)$ & $(0.70,3)$ \\
$S_3$ & $(0.60,4)$ & $(0.80,3)$ & $(0.90,5)$ & $(0.70,4)$ & $(0.80,4)$ \\
$S_4$ & $(0.75,2)$ & $(0.75,2)$ & $(0.75,2)$ & $(0.75,2)$ & $(0.75,2)$ \\
\bottomrule
\end{tabular}
\end{table}

As in Example 7.1  with 
\[
B(S)=\bigwedge_{e\in E}A(S,e),
\]
one gets
\[
B(S_1)=(0.60,1),\qquad B(S_2)=(0.60,2),\qquad B(S_3)=(0.60,3),\qquad B(S_4)=(0.75,2).
\]
Now use the expression
\[
r_\lambda(\mu,m)=\lambda\mu+(1-\lambda)\frac{m}{5}.
\]
For $\lambda=0.7$,
\[
r_{0.7}(B(S_1))=0.48,\quad r_{0.7}(B(S_2))=0.54,\quad r_{0.7}(B(S_3))=0.60,\quad r_{0.7}(B(S_4))=0.645,
\]
so
\[
S_4\succ S_3\succ S_2\succ S_1.
\]
For $\lambda=0.4$,
\[
r_{0.4}(B(S_1))=0.36,\quad r_{0.4}(B(S_2))=0.48,\quad r_{0.4}(B(S_3))=0.60,\quad r_{0.4}(B(S_4))=0.54,
\]
so
\[
S_3\succ S_4\succ S_2\succ S_1.
\]
The break-even value between $S_3$ and $S_4$ is obtained from
\[
\lambda\cdot 0.60+(1-\lambda)\cdot \frac35=\lambda\cdot 0.75+(1-\lambda)\cdot \frac25,
\]
which gives $\lambda=\frac47$. Thus the SV-model not only keeps grade and evidence together, but also makes the balance between them explicit.
\end{example}

\subsection{A hybrid ranking result}

If a product-scale value is reduced to only one coordinate, then any difference carried by the other coordinate is lost. In the present setting, a grade-only reduction cannot distinguish alternatives that differ only in evidence, while an evidence-only reduction cannot distinguish alternatives that differ only in grade. The next theorem makes this clear in the case of three alternatives.

\begin{theorem}
Let $k\geq 1$ and let $B:U\to [0,1]\times\{0,1,\dots,k\}$ be the aggregated SV-set. Assume there exist $u_1,u_2,u_3\in U$ such that
\[
B(u_1)=(\mu,m),
\qquad
B(u_2)=(\mu,m'),
\qquad
B(u_3)=(\mu',m),
\]
with $\mu<\mu'$ and $m<m'$. Define
\[
r_\lambda(\alpha,n)=\lambda\alpha+(1-\lambda)\frac{n}{k},
\qquad 0<\lambda<1,
\]
and set
\[
\lambda^\ast=\frac{m'-m}{k(\mu'-\mu)+(m'-m)}\in(0,1).
\]
Then the following hold.
\begin{enumerate}[label=(\roman*)]
    \item Any scoring, ranking, or choice procedure depending only on the grade projection $\pi_1\circ B$ cannot distinguish $u_1$ from $u_2$.
    \item Any scoring, ranking, or choice procedure depending only on the evidence projection $\pi_2\circ B$ cannot distinguish $u_1$ from $u_3$.
    \item If $0<\lambda<\lambda^\ast$, then
    \[
    r_\lambda(B(u_2))>r_\lambda(B(u_3))>r_\lambda(B(u_1)).
    \]
    \item If $\lambda^\ast<\lambda<1$, then
    \[
    r_\lambda(B(u_3))>r_\lambda(B(u_2))>r_\lambda(B(u_1)).
    \]
\end{enumerate}
Consequently, for every $\lambda\in(0,1)\setminus\{\lambda^\ast\}$, the hybrid score gives a strict total order on $\{u_1,u_2,u_3\}$, whereas neither one-coordinate reduction can produce the same order without ties.
\end{theorem}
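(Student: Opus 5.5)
Parts (i) and (ii) follow from the definitions alone. A procedure that depends only on $\pi_1\circ B$ is a function of the grade coordinate. Since $\pi_1(B(u_1))=\mu=\pi_1(B(u_2))$, it assigns the same input, and hence the same output, to $u_1$ and $u_2$. Symmetrically, $\pi_2(B(u_1))=m=\pi_2(B(u_3))$ gives (ii). I would also note here that $\lambda^\ast\in(0,1)$, because the numerator $m'-m$ is positive and the denominator exceeds it by $k(\mu'-\mu)>0$.

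For (iii) and (iv) I would compute the three pairwise differences of scores:
\[
r_\lambda(B(u_2))-r_\lambda(B(u_1))=(1-\lambda)\frac{m'-m}{k}>0,
\qquad
r_\lambda(B(u_3))-r_\lambda(B(u_1))=\lambda(\mu'-\mu)>0 .
\]
Both are positive for every $\lambda\in(0,1)$, so $u_1$ is always strictly last. The third difference is
\[
D(\lambda)=r_\lambda(B(u_3))-r_\lambda(B(u_2))=\lambda(\mu'-\mu)-(1-\lambda)\frac{m'-m}{k}.
\]
This is affine in $\lambda$ with slope $(\mu'-\mu)+(m'-m)/k>0$, so it is strictly increasing. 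Solving $D(\lambda)=0$ and multiplying through by $k$ gives exactly $\lambda=\lambda^\ast$. Hence $D<0$ for $\lambda<\lambda^\ast$, which is (iii), and $D>0$ for $\lambda>\lambda^\ast$, which is (iv).

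For the concluding statement, take any $\lambda\neq\lambda^\ast$. By (iii) or (iv) the three scores are pairwise distinct, so they give a strict total order. A grade-only procedure ties $u_1$ with $u_2$ by (i), and an evidence-only procedure ties $u_1$ with $u_3$ by (ii). So neither reduction can reproduce a tie-free order.

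The only delicate point is interpretive rather than computational. One must read ``depending only on $\pi_1\circ B$'' as ``the output at $u$ is a function of $\pi_1(B(u))$'', meaning pointwise dependence on the projection. I would state this reading explicitly. Otherwise a procedure could use the whole function $\pi_1\circ B$ together with the labels $u_i$, and the claim would fail. The arithmetic itself is routine.
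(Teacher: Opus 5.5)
Your proposal is correct and follows the paper's proof essentially step for step: the same projection argument for (i)--(ii), the same three score differences, and the same sign analysis of the $u_3$--$u_2$ difference around $\lambda^\ast$. The additions (checking $\lambda^\ast\in(0,1)$, noting that the affine difference has positive slope, and reading ``depending only on $\pi_1\circ B$'' pointwise, which matches the paper's phrase ``factoring through $\pi_1$'') only make explicit what the paper leaves implicit.
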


\begin{proof}
For~(i), one has $\pi_1(B(u_1))=\pi_1(B(u_2))=\mu$, so any rule factoring through $\pi_1$ sees identical data for $u_1$ and $u_2$ and therefore cannot distinguish them. For~(ii), one has $\pi_2(B(u_1))=\pi_2(B(u_3))=m$, so any rule factoring through $\pi_2$ must identify $u_1$ and $u_3$.

Next,
\[
r_\lambda(B(u_2))-r_\lambda(B(u_1))=(1-\lambda)\frac{m'-m}{k}>0,
\]
and
\[
r_\lambda(B(u_3))-r_\lambda(B(u_1))=\lambda(\mu'-\mu)>0,
\]
for every $0<\lambda<1$. Hence both $u_2$ and $u_3$ are ranked strictly above $u_1$.

To compare $u_2$ and $u_3$, observe that
\[
r_\lambda(B(u_3))-r_\lambda(B(u_2))=\lambda(\mu'-\mu)-(1-\lambda)\frac{m'-m}{k}.
\]
This quantity is zero exactly when
\[
\lambda=\frac{m'-m}{k(\mu'-\mu)+(m'-m)}=\lambda^\ast.
\]
It is therefore negative for $0<\lambda<\lambda^\ast$ and positive for $\lambda^\ast<\lambda<1$. This gives~(iii) and~(iv). Since a grade-only rule must tie $u_1$ with $u_2$, and an evidence-only rule must tie $u_1$ with $u_3$, neither can realize the same strict total order.
\end{proof}

\begin{example}\label{ex:proposal-order}
Let $k=5$ and suppose three procurement proposals have aggregated profiles
\[
B(P_1)=(0.65,2),\qquad B(P_2)=(0.65,4),\qquad B(P_3)=(0.80,2).
\]
Then
\[
\lambda^\ast=\frac{4-2}{5(0.80-0.65)+(4-2)}=\frac{8}{11}.
\]
If $\lambda=\tfrac12<\lambda^\ast$, then
\[
r_{1/2}(B(P_1))=0.525,\qquad r_{1/2}(B(P_2))=0.725,\qquad r_{1/2}(B(P_3))=0.60,
\]
so
\[
P_2\succ P_3\succ P_1.
\]
If $\lambda=0.9>\lambda^\ast$, then
\[
r_{0.9}(B(P_1))=0.625,\qquad r_{0.9}(B(P_2))=0.665,\qquad r_{0.9}(B(P_3))=0.76,
\]
so
\[
P_3\succ P_2\succ P_1.
\]
In both cases, the grade-only projection ties $P_1$ with $P_2$, while the evidence-only projection ties $P_1$ with $P_3$.
\end{example}

\section{Conclusion}

This paper studies generalized membership models through one simple idea: a map into a chosen scale. When the scale is a bounded De Morgan lattice, many familiar models can be handled in the same framework. In this setting, the usual pointwise operations, transport maps, and cut constructions are also available.

The main strength of this viewpoint is that the outer form remains the same, while the meaning of the values can change from one model to another. This is seen in the product-scale applications and also in the discussion of Type-2 fuzzy sets and lattice-valued interval soft sets. At the same time, writing a model in SV form does not mean that all such models become identical. Some of them still depend on extra structure outside the stored values.

There is still good scope for further work, especially on richer scales, more algebraic and topological examples, and more application-based constructions. Even in its present form, the SV viewpoint gives a simple and workable way to organize a wide range of generalized set models.

\section*{Acknowledgments}
The author has no acknowledgments to report.

\section*{Funding}
The author received no financial support for the research, authorship, and/or publication of this article.

\section*{Disclosure of interest}
The author reports there are no competing interests to declare.

\section*{Data availability statement}
No new data were generated or analysed in this study. Data sharing is not applicable to this article.


\end{document}